\documentclass{article}

\usepackage{epic}
\usepackage{eepic}
\usepackage{mathtools}
\usepackage{ mathrsfs }
\usepackage{amsmath}
\usepackage{amsthm}
\usepackage{amssymb}
\usepackage{enumerate}
\usepackage{tikz-cd}

\usepackage{xcolor}

\usepackage{url}

\newcommand{\C}{\mathbb C}
\newcommand{\Q}{\mathbb Q}
\newcommand{\R}{\mathbb R}
\newcommand{\Z}{\mathbb Z}

\newcommand{\A}{\mathbb A}

\newcommand{\Rea}{\operatorname{Re}}

\newcommand{\Aut}{\operatorname{Aut}}

\newcommand{\tr}{\operatorname{tr}}

\newcommand{\GL}{\operatorname{GL}}

\newcommand{\Ort}{\operatorname{O}}

\newcommand{\Sort}{\operatorname{SO}}

\newcommand{\SL}{\mathrm{SL}}
\newcommand{\Sp}{\mathrm{Sp}}

\newcommand{\PGL}{\mathrm{PGL}}

\newcommand{\g}{g}

\newcommand{\comment}[1]{}
\newcommand{\St}{\operatorname{St}}

\newtheorem{thm}{Theorem}
\newtheorem*{con*}{Conjecture}
\newtheorem{prp}[thm]{Proposition}

\theoremstyle{definition}

\theoremstyle{remark}
\newtheorem{rem}{Remark}

\newcommand{\mainthm}{Let $m\geq24$ be an integer divisible by $8$. Then
	\[ \frac{m}{2}\leq \g_m\leq\frac{3m}{4}. \]}

\title{Linear independence of Theta series}
\author{Manuel Karl-Heinz M\"uller}

\begin{document}
	
	\begin{center}
		{\Large\bf Linear independence of theta series of positive-definite, unimodular even lattices}\\[10mm]
		Manuel K.-H.\ M\"{u}ller,\\
		Sapienza Universit\`a di Roma, Rome, Italy, \\
		math@mkhmueller.de
	\end{center}
	
	\vspace*{1.2cm}
	\begin{center}
		\begin{minipage}{.8\textwidth}
			\noindent
			{\small We show that the minimal $\g$ for which the degree $\g$ theta series of positive-definite, unimodular even lattices of rank $m\geq24$ are linearly independent is bounded between $\frac{m}{2}$ and $\frac{3m}{4}$.}
		\end{minipage}
	\end{center}

	\vspace*{1.2cm}
	
	\section{Introduction}
	
	Let $L$ be a positive-definite, unimodular even lattice of rank $m$ with bilinear form $(\cdot,\cdot)$. Then $m$ must be divisible by $8$. For an integer $\g\geq1$ we define the degree $\g$ \emph{theta series}
	\[ \theta_L^{(\g)}(\tau) \coloneqq \frac{1}{|\Aut(L)|}\sum_{\lambda\in L^\g}e^{\pi i\tr((\lambda,\lambda)\tau)}, \quad \tau\in\mathbb{H}_\g \]
	where $(\lambda,\lambda) = ((\lambda_i,\lambda_j))_{i,j=1}^\g\in\mathrm{Mat}_\g(\Z)$ for $\lambda = (\lambda_1,\hdots,\lambda_\g)\in L^\g$ and $\mathbb{H}_\g$ denotes the Siegel half-space of degree $\g$. By Poisson summation, $\theta_L^{(\g)}$ is a Siegel modular form of weight $m/2$ for the full modular group $\Sp_{2\g}(\Z)$. We denote the space of Siegel modular forms of weight $k$ by $\mathrm{M}_k(\Sp_{2\g}(\Z))$ and the subspace of cuspforms by $\mathrm{S}_k(\Sp_{2\g}(\Z))$.
	
	Let $I\!I_{m,0}$ denote the genus of positive-definite, unimodular even lattices of rank $m$ and let us write $[L]\in I\!I_{m,0}$ for the isomorphism class of $L$. Clearly, $\theta_L^{(\g)}$ only depends on $[L]$ so that it induces a homomorphism
	\[ \vartheta_m^{(\g)}: \C[I\!I_{m,0}] \to \mathrm{M}_{m/2}(\Sp_{2\g}(\Z)). \]
	It is not difficult to verify that $\vartheta_m^{(m)}$ is injective. Let $\g_m\geq0$ be the smallest integer such that $\vartheta_m^{(\g_m)}$ is injective. For $m=8$ the genus $I\!I_{8,0}$ contains only the lattice $E_8$, hence, $\g_8 = 0$. In the case $m=16$ we have $\g_{16} = 4$, which comes from the existence of the \emph{Schottky form} in $\mathrm{S}_{8}(\Sp_{8}(\Z))$ \cite{Igusa}. It was shown in \cite{BFW} that $\g_{24} = 12$ by constructing a non-zero cusp form in $\mathrm{S}_{12}(\Sp_{24}(\Z))$ and this construction was generalized in \cite{Salvati} to all $m=24k$ yielding a lower bound $\g_{24k}\geq12k$. We will show
	
	
	\begin{thm}\label{thm:mainthm}
		\mainthm
	\end{thm}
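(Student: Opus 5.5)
We prove the two inequalities separately. The upper bound $\g_m\le 3m/4$ is the injectivity statement. The lower bound $\g_m\ge m/2$ needs a nonzero element of $\ker\vartheta_m^{(m/2-1)}$.

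\textbf{Upper bound.} We use the standard link between theta series and the Siegel operator $\Phi$ together with the theory of Hecke eigenforms. By Rallis/B\"ocherer, the image of $\vartheta_m^{(\g)}$ decomposes along Hecke eigenforms $F$ of weight $m/2$, and $\ker\vartheta^{(\g)}_m$ is cut out by eigenforms whose \emph{theta-degree} (the minimal degree in which they appear) exceeds $\g$. The plan is as follows.
\begin{enumerate}[(i)]
\item Express $\C[I\!I_{m,0}]$, via the Hecke algebra of $\Ort(L\otimes\Q)$, as a sum of eigenvectors $v$.
\item For each $v$ let $\g(v)$ be its first occurrence: the least $\g$ with $\vartheta^{(\g)}_m(v)\ne0$. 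Then $\vartheta^{(\g(v))}_m(v)$ is a cusp form, since $\Phi$ lowers the degree and $\Phi\vartheta^{(\g)}=\vartheta^{(\g-1)}$.
\item Cuspidal theta lifts satisfy Rallis' tower property and the standard $L$-function relation: the standard $L$-function of $\vartheta^{(\g)}(v)$ contains the factor $\zeta(s)\prod\zeta(s\pm j)$ up to $m/2-\g-1$. For a cusp form of degree $\g$ and weight $k=m/2$, the Andrianov/B\"ocherer bounds on the Satake parameters (cusp forms occur only for $k$ large relative to $\g$, i.e. $\g\le 2k$ type constraints refined by the Eisenstein part) force $2\g\le$ something. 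Concretely, we aim to show that a cuspidal lift in degree $\g$ with weight $m/2$ requires $\g\le 3m/4$. We will use the Weissauer bound $k\ge \g/2$ refined to singular-weight-type constraints; the relevant inequality is that a nonzero cusp form in weight $k$ on $\Sp_{2\g}(\Z)$ arising as a theta lift must have $\g\le 3k/2$.
\end{enumerate}
Hence every $v$ has $\g(v)\le 3m/4$, and injectivity in degree $3m/4$ follows, because eigenvectors with distinct eigenvalues are independent and $\vartheta$ is Hecke-equivariant (Eichler commutation relation).

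\textbf{Lower bound.} We construct, for $m=8n\ge24$, a nonzero $v$ with $\vartheta^{(m/2-1)}(v)=0$, i.e. with first occurrence $\ge m/2$. We will realise such $v$ as an Ikeda lift: take a cusp form $f$ of weight $2\kappa$ on $\SL_2(\Z)$ with $\kappa=m/4$. Its Ikeda lift $I(f)\in \mathrm{S}_{m/2}(\Sp_{m}(\Z))$ exists when $m/2\equiv m/2 \pmod 2$, which holds since $8\mid m$. A nonzero cusp form in weight $m/2$ and degree $m/2$ exists for $m\ge24$, because $S_{m/2}(\SL_2(\Z))\ne0$ once $m/2\ge12$. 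By B\"ocherer's basis problem result in the range $\g\le m/2$, $I(f)$ lies in the span of theta series, and by its $L$-function it is a first-occurrence lift in degree $m/2$. The preimage $v$ then satisfies $\vartheta^{(m/2-1)}(v)=\Phi\vartheta^{(m/2)}(v)=\Phi I(f)=0$.

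\textbf{Main obstacle.} The hardest step is the precise upper bound in (iii): showing that no eigenvector has first occurrence above $3m/4$. I would attack it via the Rallis inner product formula. The first occurrence is detected by poles or non-vanishing of the standard $L$-function at specific points, and the conservation relation (Kudla–Rallis/Sun–Zhu) between the orthogonal and symplectic towers gives first occurrence indices summing to $2m$ across the two towers. Combining this with the bound that the other tower's index is at least $m/2$ should give $\g(v)\le 3m/4$.
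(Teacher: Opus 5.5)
\textbf{Gap in the upper bound.} This is the substantive half of the theorem, and step (iii) simply asserts the inequality you need. There is no theorem saying that a cuspidal theta lift of weight $k$ on $\Sp_{2\g}(\Z)$ satisfies $\g\le 3k/2$. The Freitag--Weissauer singular-weight bound only gives $\g\le 2k=m$, which is just the trivial injectivity of $\vartheta_m^{(m)}$.

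The conservation-relation fallback does not work either. At the real place $\Ort_m(\R)$ is compact: the trivial representation first occurs at $\g=0$ and $\det$ at $\g=m$. So local conservation says nothing about global first occurrence, which is governed by $L$-values; the case $\g(\pi^+)=3m/4$ when $L(\tfrac12,\pi_f)=0$ shows this. Even taken at face value, indices summing to $2m$ with the other one $\ge m/2$ give $3m/2$, not $3m/4$.

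The missing idea is control of the integer zeros and poles of the standard $L$-function of each Hecke eigenvector, since B\"ocherer's theorem gives $\g(\pi)=m/2\pm t(\pi)$. The paper gets this from Arthur's multiplicity formula (Ta\"ibi's version for $\Sort_m$). It writes $\psi(\pi,\St)=\bigoplus\pi_i[d_i]$ with cuspidal self-dual $\pi_i$, uses that $\psi_\infty$ has eigenvalues $\pm(\tfrac m2-1),\dots,\pm1,0,0$, and uses the parity constraint on the $n_id_i$. The case analysis is then:
\begin{itemize}
\item If $\max d_i<m/2-1$, every factor $L(t+j-\tfrac{d_i-1}{2},\pi_i)$ lies in $\Rea>1$ for $t\ge m/4$.
\item If $d_1\ge m/2-1$ and $n_1=1$, the largest element of $T(\pi)$ is the pole of $\zeta$ at $(d_1+1)/2$.
\item If $n_1=2$, then $\pi=\pi_f[m/2]$, and the largest possible integer zero is $t=m/4$, coming from $L(\tfrac12,\pi_f)$.
\end{itemize}
Nothing in your plan substitutes for this classification of parameters.

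\textbf{Gap in the lower bound.} You chose the right object: the Ikeda lift of $f\in\mathrm{S}_{m/2}(\SL_2(\Z))$ to degree and weight $m/2$ generalizes the Borcherds--Freitag--Weissauer form. However, the step ``$I(f)$ lies in the span of theta series by B\"ocherer's basis problem result'' is not available unconditionally.

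By Hecke equivariance and the multiplicity statement in Arthur's formula, $I(f)$ could only be $\vartheta_m^{(m/2)}(v_{\pi^+})$, where $v_{\pi^+}$ is the eigenvector with parameter $\pi_f[m/2]$. This is nonzero exactly when $L(\tfrac12,\pi_f)\neq0$. Asserting it for every $f$ is the open conjecture recalled in Remark~\ref{rem}.

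To repair this, you would need to pick $f$ with nonvanishing central value. That is possible unconditionally, e.g.\ via Rankin's method applied to $E_{m/4}^2$. You would then still have to produce $v_{\pi^+}$, or an equivalent criterion on the Siegel side.

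The paper avoids all of this. Arthur's formula together with the $\Ort_m/\Sort_m$ conjugation argument produces $v_{\pi^+}$ directly. Its first occurrence is either $m/2$ or $3m/4$, both $\ge m/2$, whatever the value of $L(\tfrac12,\pi_f)$.
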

	
	\begin{rem}\label{rem}
		Let $f\in\mathrm{S}_k(\SL_2(\Z))$ be an eigenform with automorphic $L$-function $L(s,f)$ (normalized such that $\frac{1}{2}$ is the natural center of the functional equation). When $k\equiv2\bmod4$, then $L\left(\frac{1}{2},f\right) = 0$. It is conjectured (see e.g.\ \cite{ConreyFarmer}) that $L\left(\frac{1}{2},f\right) \not= 0$ whenever $k\equiv0\mod4$. If this is the case for a given $k=\frac{m}{2}$, then $\g_{m}<\frac{3m}{4}$, otherwise $\g_{m} = \frac{3m}{4}$ (see end of Section \ref{sec:ProofOfTheorem}). The conjecture has been verified for all $k\leq500$ in \cite{ConreyFarmer}.
	\end{rem}
	\begin{rem}
		Theorem \ref{thm:mainthm} also implies that when $\g>\frac{3m}{4}$, then no cusp form $f\in\mathrm{S}_{m/2}(\Sp_{2\g}(\Z))$ is a linear combination of theta series $\theta_L^{(\g)}$ with $[L]\in I\!I_{m,0}$.
	\end{rem}
	\comment{\begin{rem}
		When $F\in\mathrm{S}_{\frac{k+\g}{2}}(\Sp_{2\g}(\Z))$ is the Ikeda lifting of an eigenform $f\in\mathrm{S}_k(\SL_2(\Z))$ with $k+\g\equiv0\bmod8$ such that $F$ lies in the image of $\vartheta_{k+\g}^{(\g)}$, then $\g\leq k$. This can be seen for example by comparing $L$-functions (\cite[Theorem 3.3]{Ikeda} and \cite[Folgerung 2.1]{Boecherer}). Hence, when $\g_m>m/2$, then the cuspforms in the image of $\vartheta_m^{(\g)}$ for $\g>m/2$ are not Ikeda liftings.
	\end{rem}}
	
	The space $\C[I\!I_{m,0}]$ can be identified with a certain space of automorphic forms for an orthogonal group. By a result of B\"ocherer, $\g_m$ can be determined from the zeros and poles of their corresponding $L$-functions. We will use Arthur's multiplicity formula to get information on the Arthur-Langlands parameters of the automorphic forms appearing in $\C[I\!I_{m,0}]$ from which their $L$-functions can be computed. The main arguments can be found in Section \ref{sec:ProofOfTheorem}. In Sections \ref{sec:AutomorphicRepresentatoins}, \ref{sec:MultiplicityFormula} and \ref{sec:ThetaCorrespondence} we introduce the necessary background on automorphic representations, the multiplicity formula and B\"ocherer's result on the theta correspondence. Our arguments are similar to the theory developed in \cite{CL}, where the interested reader may also find further background information on automorphic representations and multiplicity formulas for the groups this article concerns.
	\begin{rem}
		Note that the multiplicity formula of Arthur is, as of yet, still conditional on the stabilization of the twisted trace formula for the groups $\GL_N$ and $\Sort_{2n}$ (see \cite[Hypothesis 3.2.1]{Arthur}) and, hence, so is the result of this article. I would like to thank Ga\"etan Chenevier for pointing this out to me.
	\end{rem}
	
	\medskip
	
	I would also like to thank R.\ Salvati Manni and Nils R.\ Scheithauer for stimulating discussions and helpful comments.
	
	\medskip
	
	This work is funded by the Deutsche Forschungsgemeinschaft (DFG, German Research Foundation) -- 566117541.
	
	\section{Automorphic representations of orthognonal groups}\label{sec:AutomorphicRepresentatoins}
	
	Let $\Q_p$ denote the $p$-adic completion of $\Q$ at a prime $p$ with ring of integers $\Z_p$ and let $\A = \R\times\A_{\mathrm{f}} = \R\times\prod_{p<\infty}^\prime\Q_p$ be the adele ring of $\Q$ with $\widehat{\Z} = \prod_{p<\infty}\Z_p$.
	
	Throughout, $m$ will denote a positive integer divisible by $8$. We set $L_m = E_8^{\oplus m/8}$ and $V_m=L_m\otimes\R$ and define $\Z$-group schemes $\Sort_m$ and $\Ort_m$ via
	\[ R\mapsto \Sort(L_m\otimes R)\quad\text{and}\quad R\mapsto\Ort(L_m\otimes R), \]
	respectively for a ring $R$. Then $\Sort_m$ and $\Ort_m$ are reductive groups over $\Q$, where $\Sort_m$ is connected and $\Ort_m$ has two connected components. From now on let $G$ be either $\Sort_m$ or $\Ort_m$. Then the group $G(\A)$ has maximal compact subgroup $K_G=G(\R)\times G(\widehat{\Z})$. Let $\rho:G(\R)\to\GL(U)$ be an irreducible unitary representation. We denote by $\mathcal{A}_\rho(G)$ the space of $U$-valued functions $\phi:G(\A)\to U$ such that
	\[ \phi(\gamma xk) = \rho(k_\infty)^{-1}\phi(x) \]
	for $\gamma\in G(\Q)$, $x\in G(\A)$ and $k = (k_\infty,k_{\mathrm{f}})\in K_G$. Any function in $\mathcal{A}_\rho(G)$ is determined by the (finite) set of double cosets in $G(\Q)\backslash G(\A)/K_G$. Note that the map
	\[ \Ort_m(\Q)\backslash \Ort_m(\A)/K_{\Ort_m}\xrightarrow{\sim} I\!I_{m,0},\quad x\mapsto [x_{\mathrm{f}}(L_m\otimes\widehat{\Z})\cap L_m\otimes\Q] \]
	is a bijection. We say that two lattices $M,N\subset V_m$ are \emph{properly isomorphic} if there exists a $\gamma\in\Sort_m(\R)$ such that $N = \gamma(M)$ and define $\widetilde{I\!I}_{m,0}$ as the set of \emph{proper} isomorphism classes of positive-definite, unimodular even lattices of rank $m$. Then similarly $\Sort_m(\Q)\backslash\Sort_m(\A)/K_{\Sort_m}\xrightarrow{\sim} \widetilde{I\!I}_{m,0}$ is bijective.
	
	\comment{A lattice $M$ is called \emph{chiral} if $\Ort(M) = \Sort(M)$ and \emph{achiral} otherwise. Let $M_1,\hdots,M_h\subset V_m$ be a set of representatives for $I\!I_{m,0}$, where $M_1,\hdots,M_{h'}$ are achiral and $M_{h'+1},\hdots,M_{h'}$ are chiral. Then for $i=h'+1,\hdots,h$ there are lattices $M_i^{\pm}\subset V_m$ such that $M_i^\pm$ are isomorphic to $M$ with respect to $\Ort_m(\R)$ and for any $\gamma\in\Ort_m(\R)$ such that $M^- = \gamma(M^+)$ we have $\det(\gamma) = -1$. Then 
	\[ \widetilde{I\!I}_{m,0} = \{[M_1],\hdots,[M_{h'}]\}\cup\{[M_{h'+1}^+],\hdots,[M_{h}^+]\}\cup\{[M_{h'+1}^-],\hdots,[M_{h}^-]\} \]
	and there is a natural surjective map $\widetilde{I\!I}_{m,0}\to I\!I_{m,0}$.
	We denote by $\C$ the trivial representation of $\Sort_m$ and $\Ort_m$, respectively. Then we can identify $\mathcal{A}_\C(\Sort_m)$ with $\C[\widetilde{I\!I}_{m,0}]$ and $\mathcal{A}_\C(\Ort_m)$ with $\C[I\!I_{m,0}]$ and 
	\[ \mathcal{A}_\C(\Sort_m) = \mathcal{A}_\C(\Ort_m)\oplus\mathcal{A}_{\det}(\Ort_m). \]}
	
	The morphism $\Sort_m\hookrightarrow\Ort_m$ defines a natural surjective map $\widetilde{I\!I}_{m,0}\to I\!I_{m,0}$. The restriction of $\phi\in\mathcal{A}_\rho(\Ort_m)$ to $\Sort_m$ lies in $\mathcal{A}_{\rho'}(\Sort_m)$ where $\rho'=\rho|_{\Sort_m(\R)}$. This defines an inclusion
	\[ \mathcal{A}_\rho(\Ort_m)\hookrightarrow\mathcal{A}_{\rho'}(\Sort_m). \]
	We denote by $\C$ the trivial representation of $\Sort_m$ and $\Ort_m$, respectively. Then we can identify $\mathcal{A}_\C(\Sort_m)$ with $\C[\widetilde{I\!I}_{m,0}]$ and $\mathcal{A}_\C(\Ort_m)$ with $\C[I\!I_{m,0}]$ and the above inclusion induces an isomorphism
	\[ \mathcal{A}_\C(\Sort_m) \cong \mathcal{A}_\C(\Ort_m)\oplus\mathcal{A}_{\det}(\Ort_m) \]
	(see \cite[Section 4.4.4]{CL}).
	
	Let
	\[ \mathcal{H}_p(G) \coloneqq C_{\mathrm{c}}^\infty(G(\Z_p)\backslash G(\Q_p)/G(\Z_p)) \]
	be the \emph{local Hecke algebra} consisting of continuous functions with compact support on $G(\Q_p)$ that are left- and right-$G(\Z_p)$-invariant with the usual convolution product. Then 
	\[ \mathcal{H}(G) \coloneqq C_{\mathrm{c}}^\infty(G(\widehat{\Z})\backslash G(\A_{\mathrm{f}})/G(\widehat{\Z})) \cong \bigotimes_{p<\infty} \mathcal{H}_p(G) \]
	is called the \emph{global Hecke algebra}. It acts on $\mathcal{A}_\rho(G)$ from the right via
	\begin{align*}
		\mathcal{A}_\rho(G)\times \mathcal{H}(G)&\to\mathcal{A}_\rho(G) \\
		(\phi,f)&\mapsto(x\mapsto\int_{G(\A_{\mathrm{f}})} f(y)\phi(xy)\mathrm{d}y)
	\end{align*}
	and by transport of structure on $\C[\widetilde{I\!I}_{m,0}]$ (for $G=\Sort_m$) and $\C[I\!I_{m,0}]$ (for $G=\Ort_m$). The algebra $\mathcal{H}(\Ort_m)$ is commutative and hence, $\mathcal{A}_\rho(\Ort_m)$ is simultaneously diagonalizable with respect to $\mathcal{H}(\Ort_m)$.

	There exists a positive Radon measure on $G(\Q)\backslash G(\A)$. The space of square-integrable $\C$-valued functions $L^2(G(\Q)\backslash G(\A))$ is endowed with the right regular representation of $G(\A)$. An \emph{automorphic representation} of $G$ (in the $L^2$-sense) is an irreducible unitary representation of $G(\A)$ that is isomorphic to a subquotient of $L^2(G(\Q)\backslash G(\A))$ (cf.\ \cite[Definition 3.3]{GetzHahn}). We denote the set of isomorphism classes of automorphic representations by $\Pi(G)$. The subspace
	\[ L^2_{\mathrm{disc}}(G(\Q)\backslash G(\A))\subset L^2(G(\Q)\backslash G(\A)) \]
	which is the largest closed subspace that decomposes as a Hilbert space direct sum of irreducible subrepresentations of $L^2(G(\Q)\backslash G(\A))$ is called the \emph{discrete spectrum}. We denote by $\Pi_{\mathrm{disc}}(G)$ the subset of representations appearing in $L^2_{\mathrm{disc}}(G(\Q)\backslash G(\A))$ and let $m(\pi)$ denote the multiplicity of $\pi$ in $L^2_{\mathrm{disc}}(G(\Q)\backslash G(\A))$. Let $\mathrm{Irr}(G(\R))$ denote the set of isomorphism classes of topologically irreducible unitary	representations of $G(\R)$ and $\rho^*$ the dual representation of $\rho\in\mathrm{Irr}(G(\R))$. Then we have an isomorphism
	\begin{align*}
		\widehat{\bigoplus_{\rho\in\mathrm{Irr}(G(\R))}}\rho^*\otimes\mathcal{A}_\rho(G) &\xrightarrow{\sim} L^2_{\mathrm{disc}}(G(\Q)\backslash G(\A))^{G(\widehat{\Z})} \\
		f\otimes\phi&\mapsto (x\mapsto f(\phi(x)))
	\end{align*}
	where $L^2_{\mathrm{disc}}(G(\Q)\backslash G(\A))^{G(\widehat{\Z})}$ denotes the subspace of right-$G(\widehat{\Z})$-invariants (see \cite[Section 4.3.2]{CL}). Any $\pi\in\Pi_{\mathrm{disc}}(G)$ with $\dim\pi^{G(\widehat{\Z})}>0$ is of the form $\pi\cong\pi_\infty\otimes\pi_{\mathrm{f}}$ where $\pi_\infty\cong\rho^*$ for some $\rho\in\mathrm{Irr}(G(\R))$ and $\pi_{\mathrm{f}}$ is generated by a $\phi\in\mathcal{A}_\rho(G)$
	. We denote the subspace of such representation $\Pi_{\mathrm{disc}}(G)^{G(\widehat{\Z})}$. In summary, we obtain
	\begin{thm}\label{thm:Fact}
		The space $\C[I\!I_{m,0}]$ has a basis consisting of eigenforms for $\mathcal{H}(\Ort_m)$. The eigenforms are in one-to-one correspondence with the automorphic representations $\pi\in\Pi_{\mathrm{disc}}(\Ort_m)^{G(\widehat{\Z})}$ such that $\pi_\infty \cong\C$.
	\end{thm}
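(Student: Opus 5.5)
The plan is to assemble Theorem \ref{thm:Fact} from the facts recalled above. Throughout, $\C$ denotes the trivial representation.

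\emph{Identification.} The bijection $\Ort_m(\Q)\backslash\Ort_m(\A)/K_{\Ort_m}\xrightarrow{\sim} I\!I_{m,0}$ shows that a function in $\mathcal{A}_\C(\Ort_m)$ is the same thing as a function on the finite set $I\!I_{m,0}$. This gives $\mathcal{A}_\C(\Ort_m)\cong\C[I\!I_{m,0}]$, and the action of $\mathcal{H}(\Ort_m)$ is transported along this identification by definition.

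\emph{Eigenbasis.} The space is finite-dimensional. Since $\mathcal{H}(\Ort_m)$ is commutative, it suffices to show that every $f\in\mathcal{H}(\Ort_m)$ acts semisimply. I would do this with the Petersson-type inner product
\[ \langle\phi,\psi\rangle=\sum_{[L]}\frac{\phi(L)\overline{\psi(L)}}{|\Aut(L)|}, \]
which is the $L^2$ inner product restricted to right-$K_{\Ort_m}$-invariant functions. With respect to it, the operator of $f$ has adjoint the operator of $f^\vee(y)=\overline{f(y^{-1})}$. Hence the algebra is a commutative $*$-algebra and consists of normal operators. A commuting family of normal operators is simultaneously unitarily diagonalizable, which gives the basis of eigenforms.

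\emph{Bijection.} I would use the displayed isomorphism
\[ \widehat{\bigoplus_{\rho}}\rho^*\otimes\mathcal{A}_\rho\cong L^2_{\mathrm{disc}}(G(\Q)\backslash G(\A))^{G(\widehat{\Z})}. \]
Its $\rho=\C$ summand is the subspace of $L^2_{\mathrm{disc}}$ of $K_{\Ort_m}$-invariant vectors on which $G(\R)$ acts trivially. Everything in that summand lies in the discrete spectrum, because $\Ort_m(\R)$ is compact and $L^2=L^2_{\mathrm{disc}}$.
\begin{itemize}
\item \emph{From $\pi$ to eigenforms.} Take $\pi\in\Pi_{\mathrm{disc}}(\Ort_m)^{G(\widehat{\Z})}$ with $\pi_\infty\cong\C$, with $\pi=\pi_\infty\otimes\bigotimes'_p\pi_p$ and each $\pi_p$ unramified. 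The space of spherical vectors $\pi_p^{G(\Z_p)}$ is one-dimensional, since $\mathcal{H}_p$ is commutative (Gelfand pair). So each copy of $\pi$ in $L^2_{\mathrm{disc}}$ contributes a line of $\mathcal{H}(\Ort_m)$-eigenvectors in $\mathcal{A}_\C(\Ort_m)$.
\item \emph{From eigenforms to $\pi$.} An eigenform $\phi$ generates an irreducible $G(\A)$-subrepresentation, so it produces such a $\pi$.
\item \emph{Multiplicities.} Satake parameters determine $\pi_p$. Therefore the eigenform lines correspond to the pairs ($\pi$, copy of $\pi$ in $L^2_{\mathrm{disc}}$), i.e. to $\pi$ counted with multiplicity $m(\pi)$.
\end{itemize}

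\emph{Main obstacle.} The hard point is whether the correspondence is literally one-to-one on representations, which requires $m(\pi)\le1$. Equivalently, eigenvalue systems of $\mathcal{H}(\Ort_m)$ on $\C[I\!I_{m,0}]$ must be distinct. The first thing I would try is to read the statement as a bijection between eigenspaces (eigencharacters) and isomorphism classes $\pi$, with the eigenbasis chosen compatibly. The eigenspaces correspond exactly to the $\pi_{\mathrm{f}}$, since $\pi_\infty=\C$ is forced. Any needed multiplicity-one input would then be deferred to Arthur's multiplicity formula in Section \ref{sec:MultiplicityFormula}, which for $\Ort_m$ with trivial $\pi_\infty$ gives $m(\pi)\in\{0,1\}$.
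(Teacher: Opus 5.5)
Your proposal is correct and follows the paper's route: the paper just assembles the identification $\mathcal{A}_\C(\Ort_m)\cong\C[I\!I_{m,0}]$, the commutativity of $\mathcal{H}(\Ort_m)$, and the isomorphism $\widehat{\bigoplus}_\rho\rho^*\otimes\mathcal{A}_\rho(G)\cong L^2_{\mathrm{disc}}(G(\Q)\backslash G(\A))^{G(\widehat{\Z})}$. Your adjoint argument ($f\mapsto f^\vee$, so the operators are normal) supplies the justification for simultaneous diagonalizability that the paper's ``commutative, hence diagonalizable'' leaves out, and your eigenspace reading of the correspondence is exactly what that argument delivers. The one inaccuracy is your side remark on multiplicities: Theorem \ref{thm:Arthur} is about $\Sort_m$ and allows $\sum_{\pi\in\Pi}m(\pi)=2$, so $m(\pi)\le1$ for $\Ort_m$ does not follow from it directly --- you would have to split the $\psi$-part of $\mathcal{A}_\C(\Sort_m)\cong\mathcal{A}_\C(\Ort_m)\oplus\mathcal{A}_{\det}(\Ort_m)$ under the outer element $s$, as in the proof of Proposition \ref{prp:BorcherdsFreitagWeissauerForm}, and the case where that two-dimensional piece comes from a single $\pi\cong s\pi$ would still need a separate argument.
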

	
	\section{Arthur's multiplicity formula}\label{sec:MultiplicityFormula}
	
	We will now describe Arthur's multiplicity formula. In his original work \cite{Arthur}, Arthur considered quasi-split groups. The group $\Sort_m$ is not quasi-split, however, it is an inner twist of the (quasi-)split group $\Sort_{m/2,m/2}^\circ$ (the identity component of the special orthogonal group of signature $(m/2,m/2)$) and Ta\"ibi extended the multiplicity formula to such groups in \cite{Taibi}.
	
	Let $G$ be a connected reductive group over $\Q$. Its \emph{Langlands dual group} $\widehat{G}$ is a reductive group over $\C$ with root datum dual to that of $G$. We do not want to go into more detail here since for us it suffices to know that the Langlands dual group of $\Sort_m$ is $\Sort_m(\C)$ (see e.g.\ \cite[Section 6.1.2]{CL}). When $G$ is a classical group, such as $\GL_n$, $\Sp_{2\g}$ or $\Sort_m$, then $\widehat{G}$ has a so-called \emph{standard representation} $\St$. In the case of $\Sort_m$, this is the inclusion
	\[ \St:\Sort_m(\C)\hookrightarrow\GL(L_m\otimes\C). \]
	Arthur associates to a representation $\pi\in\Pi_{\mathrm{disc}}(G)$ a formal symbol
	\[ \psi(\pi,\St) = \bigoplus_{i=1}^k\pi_i[d_i] \]
	called its \emph{standard parameter} where $\pi_i\in\Pi_{\mathrm{disc}}(\PGL_{n_i})$ are self-dual cuspidal automorphic representations, $d_i\in\Z_{\geq1}$ and $\sum_{i=1}^kn_id_i = m \coloneqq \dim(\St)$.
	Let $(c_\infty(\pi),c_2(\pi),c_3(\pi),\hdots)$ be the tuple indexed by $\infty$ and the primes where $c_p(\pi)\in\widehat{G}(\C)_{\mathrm{ss}}$ are the \emph{Satake-parameters} of $\pi$ and $c_\infty(\pi)\in\widehat{\mathfrak{g}}_{\mathrm{ss}}$ is the image of the infinitesimal character of $\pi_\infty$ under the Harish-Chandra isomorphism. Note that $\widehat{\mathfrak{g}}$ denotes the Lie-algebra of $\widehat{G}(\C)$ and $X_{\mathrm{ss}}$ denotes the set of conjugacy classes of semi-simple elements in $X$. We may also define $[d]_p\in\SL_d(\C)_{\mathrm{ss}}$ as the element with eigenvalues $p^{-\frac{d-1}{2}+j}$ for $j=0,\hdots,d-1$ and $[d]_\infty\in\mathrm{Mat}_d(\C)_{\mathrm{ss}}$ as the element with eigenvalues $-\frac{d-1}{2}+j$ for $j=0,\hdots,d-1$. Then for any formal symbol $\psi$ as above we set
	\[ \psi_p = \bigoplus_{i=1}^kc_p(\pi_i)\otimes[d_i]_p\in\SL_{n}(\C)_{\mathrm{ss}} \] 
	and equally for $p=\infty$. If $\psi$ is the standard parameter of $\pi$,then $\psi_\infty = \St(c_\infty(\pi))$ and $\psi_p = \St(c_p(\pi))$, respectively. The \emph{standard $L$-function} of $\pi$ is given by
	\begin{align*}
		L(s,\pi) &\coloneqq\prod_{p<\infty}\det(1-p^{-s}\St(c_p(\pi)))^{-1} \\
		&= \prod_{i=1}^k\prod_{j=0}^{d_i-1}L\left(s+j-\frac{d_i-1}{2},\pi_i\right)
	\end{align*}
	and the Euler product is absolutely convergent for $\Rea(s)>1$ (see \cite[Section 3]{Langlands}, \cite[Section 2.5]{Shahidi}, and \cite{JacquetShalika}). The functions $L(s,\pi_i)$ admit holomorphic extensions to all of $\C$ \cite{GodementJacquet} except when $\pi_i=1$ in which case $L(s,1) = \zeta(s)$ is the Riemann-$\zeta$-function, which is meromorphic on $\C$ with a simple pole in $s=1$. When $\pi\in\Pi_{\mathrm{disc}}(\PGL_{n})$ and $\pi'\in\Pi_{\mathrm{disc}}(\PGL_{n'})$ are self-dual and cuspidal, we may also consider $L(s,\pi\times\pi')$. If $L_\infty(s,\pi\times,\pi')$ is a suitable product of \emph{$\Gamma$-factors}, then $\Lambda(s,\pi\times\pi') \coloneqq L_\infty(s,\pi\times\pi')L(s,\pi\times\pi')$ admits a functional equation of the form
	\[ \Lambda(s,\pi\times\pi') = \varepsilon(\pi\times\pi')\Lambda(1-s,\pi\times\pi') \]
	with $\varepsilon(\pi\times\pi')\in\C^\times$ (cf.\ \cite{Cogdell} for more information on $L$-functions).
	
	Finally, let 
	\[ \psi = \bigoplus_{i=1}^k\pi_i[d_i] \]
	and assume that the eigenvalues of $\psi_\infty$ are the integers
	\[ w_1>\hdots>w_{m/2}\geq-w_{m/2}>\hdots>-w_1. \]
	We define a character $\chi:\{\pm1\}^{I_0}\to\{\pm1\}$ where $I_0\subset\{1,\hdots,k\}$ is the subset consisting of those indices $i$ for which $n_id_i\equiv 0\bmod4$ in the following way: For $i\in\ I_0$ let $s_i$ be the element defined by $(s_i)_j = (-1)^{\delta_{ij}}$.
	\begin{enumerate}[(i)]
		\item If $d_i\equiv0\bmod2$, then $\chi(s_i) = (-1)^{n_id_i/4}$.
		\item If $d_i\equiv1\bmod2$, then $\chi(s_i) = (-1)^{|K_i|}$ where $K_i$ is the set of odd indices $1\leq j\leq m/2$ such that $w_j$ is an eigenvalue of $c_\infty(\pi_i)$.
	\end{enumerate}
	The following is Theorem 8.5.8 and Conjecture 8.1.2 in \cite{CL}, which were dependent on the more general case of Arthur's multiplicity formula that has since been proven by Ta\"ibi in \cite{Taibi}.
	\begin{thm}[Theorem 8.5.8 and Conjecture 8.1.2 in \cite{CL}, cf.\ \cite{Taibi}]\label{thm:Arthur}
		Let $\psi$, $I_0$ and $\chi$ be as above. Let $\Pi\subset\Pi_{\mathrm{disc}}(\Sort_m)^{\Sort_m(\widehat{\Z})}$ be the subset of representations such that $\psi(\pi,\St) = \psi$. Suppose that \cite[Hypothesis 3.2.1]{Arthur} is true. Then $\Pi\not=\emptyset$ if and only if
		\[ \chi(s_i) = \prod_{1\leq j\leq k,j\not=i}\varepsilon(\pi_i\times\pi_j)^{\min(d_i,d_j)}\quad\forall i\in I_0. \]
		If this condition holds, then we have $\sum_{\pi\in\Pi}m(\pi) = 1$ if $I_0\not=\{1,\hdots,k\}$ and $\sum_{\pi\in\Pi}m(\pi) = 2$ otherwise. Furthermore, any $\pi\in\Pi_{\mathrm{disc}}(\Sort_m)^{\Sort_m(\widehat{\Z})}$ has a standard parameter $\psi(\pi,\St) = \bigoplus_{i=1}^k\pi_i[d_i]$.
	\end{thm}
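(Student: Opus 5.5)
The statement is Theorem~8.5.8 of \cite{CL} combined with their Conjecture~8.1.2. I would prove it by deducing it from Ta\"ibi's extension \cite{Taibi} of Arthur's multiplicity formula to inner forms of quasi-split classical groups, rather than from scratch. The plan is to specialize the general formula to the situation at hand. That situation is $G=\Sort_m$, an inner form of the split group $\Sort_{m/2,m/2}^\circ$; it is compact at $\infty$ and has a reductive model $\Sort_m$ over $\Z$, so we consider automorphic representations with nonzero $\Sort_m(\widehat{\Z})$-invariants. In this unramified, compact-at-infinity setting the general statement should collapse to the explicit sign condition $\chi(s_i)=\prod_{j\neq i}\varepsilon(\pi_i\times\pi_j)^{\min(d_i,d_j)}$.

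First I would show that every $\pi\in\Pi_{\mathrm{disc}}(\Sort_m)^{\Sort_m(\widehat{\Z})}$ lies in some global Arthur packet. Its parameter is a formal sum $\bigoplus\pi_i[d_i]$ of self-dual cuspidal $\pi_i$ with $\sum n_id_i=m$. Unramifiedness of $\pi_p$ at every $p$ forces each $\pi_i$ to be unramified everywhere, hence a representation of $\PGL_{n_i}$. The condition $\psi_\infty=\St(c_\infty(\pi))$, with $\pi_\infty$ finite dimensional, forces the eigenvalues of $\psi_\infty$ to be the distinct integers $\pm w_j$ described before the statement. This gives the last sentence of the theorem.

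Next I would identify the global component group. For a parameter of orthogonal type with $\widehat{G}=\Sort_m(\C)$, the centralizer component group $C_\psi$ is generated by the elements $s_i$ acting by $-1$ on the $i$-th block, subject to the condition of having determinant one. Modulo the center, this leaves exactly the indices with $n_id_i$ even. Working with $\Ort_m$-conjugacy, or equivalently with the refinement in \cite{CL}, cuts these down to $I_0$, the set of indices with $n_id_i\equiv0\bmod4$. Arthur's formula says that $\pi=\otimes_v\pi_v$ occurs if and only if $\prod_v\langle\cdot,\pi_v\rangle=\varepsilon_\psi$ on $C_\psi$.
\begin{itemize}
\item At finite $p$ the unramified member of the local packet pairs trivially with $C_\psi$, by the normalization via the hyperspecial subgroup $\Sort_m(\Z_p)$ (this uses that $\Sort_m$ is split over $\Q_p$).
\item Arthur's sign character $\varepsilon_\psi(s_i)$ is the product of root numbers $\varepsilon(\pi_i\times\pi_j)^{\min(d_i,d_j)}$ over pairs with $d_i+d_j$ of the appropriate parity. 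For level one these root numbers are $\pm1$ and are determined by the archimedean weights.
\item At $\infty$ the packet of $\psi_\infty$ for the compact group $\Sort_m(\R)$ contains at most one representation for each character. Ta\"ibi's normalization of the inner twist, relative to the quasi-split form, together with the computation in \cite{CL} via Adams--Johnson packets and Shelstad's character identities, shows that the character attached to the relevant $\pi_\infty$ is precisely $\chi$. Case (i) comes from the even-$d_i$ blocks, whose contribution depends only on $n_id_i/4$. Case (ii) comes from the parity of the positions $j$ at which the eigenvalues of $c_\infty(\pi_i)$ occur, i.e.\ from the interlacing pattern of the $w_j$.
\end{itemize}

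Finally, the multiplicity statement: Arthur's formula for even orthogonal groups holds up to the outer automorphism, i.e.\ for $\Ort_m(\C)$-conjugacy classes of parameters. When every $n_id_i$ is even the outer automorphism does not stabilize the parameter, and the packet splits into two $\Sort_m$-representations. This gives total multiplicity $2$ precisely when $I_0=\{1,\dots,k\}$, and $1$ otherwise. The main obstacle is the archimedean step: matching Ta\"ibi's transfer-factor normalization for the inner form with the explicit character $\chi$. This is a delicate sign computation, and I would take it wholesale from \cite[Ch.~8]{CL}. The entire argument also relies on \cite[Hypothesis 3.2.1]{Arthur}, the stabilization of the twisted trace formula, exactly as the statement assumes.
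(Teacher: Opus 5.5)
Your proposal takes the same route as the paper. The paper gives no independent argument: it imports the statement from \cite[Thm.~8.5.8, Conj.~8.1.2]{CL}, with Ta\"ibi's theorem \cite{Taibi} supplying Arthur's multiplicity formula for the non-quasi-split inner form $\Sort_m$ (conditional on \cite[Hypothesis 3.2.1]{Arthur}), and you likewise defer the decisive archimedean sign computation to \cite[Ch.~8]{CL}.

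One correction: the passage from the indices with $n_id_i$ even to $I_0$ is not an effect of $\Ort_m$-conjugacy, since taking centralizers in $\Ort_m(\C)$ enlarges rather than shrinks the index set. It comes from the level-one parity constraint that $n_id_i\equiv 2 \bmod 4$ never occurs \cite[Cor.~8.2.15~(iii)]{CL} (Observation~\ref{obs:0mod4}), and this same fact is what lets you identify ``every $n_id_i$ even'' with $I_0=\{1,\dots,k\}$ in the multiplicity count.
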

	
	\section{Theta correspondence}\label{sec:ThetaCorrespondence}
	
	We will now describe how $\g_m$ can be determined from the standard $L$-functions of the elements in $\Pi_\mathrm{disc}(\Ort_m)^{\Ort_m(\widehat{\Z})}$.	By Theorem \ref{thm:Fact}, any $\pi\in\Pi_\mathrm{disc}(\Ort_m)^{\Ort_m(\widehat{\Z})}$ with $\pi_\infty\cong\C$ defines a line $\C v_\pi\subset\C[I\!I_{m,0}]$. We denote by $\g(\pi)$ the smallest integer $\g\geq0$ such that $\vartheta_m^{(\g)}(v_\pi)\not=0$. The minimality of $\g$ is equivalent to requiring that $0\not=\vartheta_m^{(\g)}(v_\pi)\in\mathrm{S}_{m/2}(\Sp_{2\g}(\Z))$. Let
	\[ T(\pi) \coloneqq\{t\in\Z_{\geq1}\mid L(s,\pi) \text{ has a zero or pole in } t\} \]
	and $t(\pi) = \max T(\pi)$. We then have the following Theorem due to B\"ocherer.
	\begin{thm}[Theorem 3.4 in \cite{Boecherer}]\label{thm:Boecherer}
		Let $\pi\in\Pi_{\mathrm{disc}}(\Ort_m)^{\Ort_m(\widehat{\Z})}$ with $\pi_\infty\cong\C$. Then
		\begin{enumerate}[(i)]
			\item If $T(\pi) = \emptyset$, then $\g(\pi) = \frac{m}{2}$.
			\item If $T(\pi) \not= \emptyset$ and $L(s,\pi)$ has a pole in $t(\pi)$, then $\g(\pi) = \frac{m}{2}-t(\pi)$.
			\item If $T(\pi) \not= \emptyset$ and $L(s,\pi)$ has a zero in $t(\pi)$, then $\g(\pi) = \frac{m}{2}+t(\pi)$.
		\end{enumerate}
	\end{thm}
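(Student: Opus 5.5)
Since this statement is quoted from \cite{Boecherer}, I would reconstruct its proof via the doubling method (Rallis inner product formula), specialised to level one and trivial $\pi_\infty$.

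Let $v_\pi=\sum_{[L]}a_L[L]$ be the Hecke eigenvector attached to $\pi$ by Theorem~\ref{thm:Fact}, and let $\langle\cdot,\cdot\rangle$ be the Hecke-invariant inner product on $\C[I\!I_{m,0}]$ given by $\langle[L],[L']\rangle=\delta_{[L],[L']}|\Aut(L)|$, whose eigenvectors for different $\pi$ are orthogonal.

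\begin{enumerate}[(1)]
\item \emph{Doubled theta series.} For each $\g$, consider the genus theta series $\Theta^{(2\g)}=\sum_{[L]}\theta_L^{(2\g)}$ of degree $2\g$. Its restriction to $\mathbb{H}_\g\times\mathbb{H}_\g$, $\tau\mapsto\mathrm{diag}(Z,W)$, is $\sum_{[L]}\theta_L^{(\g)}(Z)\,\theta_L^{(\g)}(W)\,|\Aut(L)|$, up to the normalisation of $\theta_L$. Expanding $[L]$ in the orthogonal eigenbasis rewrites this as
\[
\sum_{\pi}\langle v_\pi,v_\pi\rangle^{-1}\,\vartheta_m^{(\g)}(v_\pi)(Z)\,\overline{\vartheta_m^{(\g)}(v_\pi)(-\overline{W})}.
\]
So $\vartheta_m^{(\g)}(v_\pi)\neq0$ exactly when the $\pi$-isotypic part of this restricted kernel is nonzero.

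\item \emph{Siegel--Weil and Garrett--B\"ocherer.} By the Siegel--Weil formula, $\Theta^{(2\g)}$ is a multiple of the Siegel Eisenstein series $E^{(2\g)}_{m/2}(\tau,s)$ at a special point $s_0$. For $m/2>4\g+1$ this is Siegel's theorem; otherwise one uses Weissauer / Kudla--Rallis regularisation, taking a value or leading Laurent coefficient. The pullback formula then writes $E^{(2\g)}$ on the diagonal as a sum over Hecke eigenforms $F$ of degree $\le\g$ of Klingen Eisenstein series times $F$, with coefficients proportional to the standard $L$-function $L(s',F,\St)$ at a shifted point, divided by zeta normalising factors.

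\item \emph{Rallis inner product.} Matching eigenforms, the form $F=\vartheta_m^{(\g)}(v_\pi)$ corresponds under the theta correspondence to $\pi$, and the standard $L$-functions agree up to $\zeta$-factors: $L(s,F,\St)$ equals $L(s,\pi)$ times a product of $\zeta$'s. Comparing both sides gives
\[
\langle\vartheta_m^{(\g)}(v_\pi),\vartheta_m^{(\g)}(v_\pi)\rangle=c_\g\,\langle v_\pi,v_\pi\rangle\cdot\frac{L\bigl(\tfrac m2-\g,\pi\bigr)}{\prod_j\zeta(\cdots)},
\]
where $c_\g$ is explicit and nonzero, the zeta factors are the Eisenstein normalisation, and the $L$-value is to be read as a leading coefficient in the regularised range. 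Hence $\g(\pi)$ is the least $\g$ for which this normalised value is nonzero.

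\item \emph{Reading off $\g(\pi)$.} Put $t=|\tfrac m2-\g|$ and use the functional equation of $L(s,\pi)$ to reduce to $s\ge0$. The archimedean $\Gamma$-factors for trivial $\pi_\infty$ are nonvanishing at the relevant integers and cancel against the zeta factors. As $\g$ runs from $0$ upward, the ratio vanishes, through a zero of $L$ or a pole in the denominator, until the evaluation point passes the extremal point of $T(\pi)$.
\begin{itemize}
\item If $t(\pi)$ is a pole, the first nonzero value is at $\g=\tfrac m2-t(\pi)$.
\item If $t(\pi)$ is a zero, the value stays zero through the critical range $\g\le\tfrac m2$ and first becomes nonzero at $\g=\tfrac m2+t(\pi)$.
\item If $T(\pi)=\emptyset$, the first nonzero value is at $\g=\tfrac m2$, where $t=0$.
\end{itemize}

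\item \emph{Main obstacle.} The hardest step is the regularisation beyond the range of absolute convergence: keeping track of exact orders of zeros and poles of the zeta normalising factors against those of $L(s,\pi)$. This amounts to the bookkeeping of the Kudla--Rallis ``first occurrence'' argument. I would first work it out for $\g\le\tfrac m2$ using the regularised Siegel--Weil formula, and then treat $\g>\tfrac m2$ via the functional equation, which is where case (iii) appears.
\end{enumerate}
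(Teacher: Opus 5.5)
The paper gives no proof of this statement; it is quoted from \cite{Boecherer}. Your toolkit (Siegel--Weil plus the Garrett--B\"ocherer pullback formula) is the natural one. However, the identity in your step (3), on which step (4) rests, is not what the pullback formula gives, and as stated it is false.

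Projecting the restricted Eisenstein series onto a cusp eigenform $F$ of degree $g$ produces, up to explicit constants, $L(\frac{m}{2}-g,F,\St)$ divided by $\zeta(\frac{m}{2})\prod_{j=1}^{g}\zeta(m-2j)$. You may substitute $F=\vartheta_m^{(g)}(v_\pi)$ only when this is a \emph{nonzero cusp form}, i.e.\ only at $g=g(\pi)$. For $g<g(\pi)$ there is no term to match. For $g>g(\pi)$ the lift is not cuspidal ($\Phi\,\vartheta_m^{(g)}(v_\pi)=\vartheta_m^{(g-1)}(v_\pi)\neq0$) and enters only through a Klingen Eisenstein series of the degree-$g(\pi)$ form.

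Moreover, for $g<\frac{m}{2}$ the $L$-functions are related the other way round. Rallis' computation gives $\St(c_p(\pi))=\St(c_p(F))\oplus[m-2g-1]_p$, so $L(s,\pi)=L(s,F,\St)\prod_{j=0}^{m-2g-2}\zeta(s+j-\frac{m-2g-2}{2})$, which has a pole exactly at $s=\frac{m}{2}-g$.

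A concrete counterexample: take $m=24$, $\pi=\pi_\Delta[12]$ (so $g(\pi)=12$) and $g=1$. Then $\vartheta_{24}^{(1)}(v_\pi)=0$. Yet $L(11,\pi)=\prod_{j=0}^{11}L(\frac{11}{2}+j,\pi_\Delta)$ lies in the region of absolute convergence, so it is nonzero, and $\zeta(12)\zeta(22)\neq0$; no regularisation is involved. Your formula would therefore force $\vartheta_{24}^{(1)}(v_\pi)\neq0$. So ``the least $g$ with nonzero normalised value'' does not compute $g(\pi)$.

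What is missing is the mechanism linking $g(\pi)$ to $t(\pi)$. The following ingredients are needed:
\begin{enumerate}[(a)]
\item The tower property $\vartheta_m^{(g)}(v_\pi)=0\Rightarrow\vartheta_m^{(g-1)}(v_\pi)=0$.
\item The parameter relation at the first occurrence: as above for $g<\frac m2$, and $\St(c_p(F))=\St(c_p(\pi))\oplus[2g+1-m]_p$ for $g\ge\frac m2$. This is what produces the pole at $t(\pi)=\frac m2-g(\pi)$ in case (ii). It must be combined with an argument excluding zeros and poles further to the right.
\item The converse direction: a pole at $t$ forces $\vartheta_m^{(m/2-t)}(v_\pi)\neq0$.
\item For case (iii), an identity that actually detects vanishing, i.e.\ one whose left side is $\|\vartheta_m^{(g)}(v_\pi)\|^2$ even when this is $0$. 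An example is the Rallis inner product formula obtained by doubling the \emph{orthogonal} group. Its $L$-value is $L(g-\frac m2+1,\pi)$, which is $0$ at $g=\frac m2+t(\pi)-1$ and nonzero at $g=\frac m2+t(\pi)$.
\end{enumerate}

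Your step (4) with $t=|\frac m2-g|$ does not match this. The functional equation sends $\frac m2-g$ to $g-\frac m2+1$, not to $|\frac m2-g|$. At the negative point $\frac m2-g$, the archimedean factor produces trivial zeros of $L(s,\pi)$. These must be tracked against the trivial zeros of $\prod_j\zeta(m-2j)$, not assumed to cancel. As written, steps (4)--(5) restate the theorem rather than prove it.
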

	
	\section{Proof of the main theorem}\label{sec:ProofOfTheorem}
	
	Before we prove the main theorem let us make some observations: Let $\pi\in\Pi_\mathrm{disc}(\Ort_m)^{\Ort_m(\widehat{\Z})}$ with $\pi_\infty\cong\C$ and standard parameter $\psi = \bigoplus_{i=1}^k\pi_i[d_i]$ where $\pi_i\in\Pi_\mathrm{disc}(\PGL_{n_i})$.
	\begin{enumerate}[1)]
		\item Since the maps $\vartheta_m^{(\g)}$ are intertwining-operators between the $\mathcal{H}(\Ort_m)$-module $\C[I\!I_{m,0}]$ and the $\mathcal{H}(\Sp_{2\g})$-modules $\mathrm{M}_{m/2}(\Sp_{2\g}(\Z))$ (see \cite{Rallis}), it follows that $\ker\vartheta_m^{(\g)}$ is an $\mathcal{H}(\Ort_m)$-submodule. Therefore, by Theorem \ref{thm:Fact}
		\[ \g_m = \max\{\g(\pi)\mid\pi\in\Pi_\mathrm{disc}(\Ort_m)^{\Ort_m(\widehat{\Z})}\text{ such that }\pi_\infty\cong\C\}. \]\label{obs:Max}
		\item The eigenvalues of $c_\infty(\pi)$ are the $m-2$ integers
		\[ \pm\left(\frac{m}{2}-1\right),\pm\left(\frac{m}{2}-2\right),\hdots,\pm1 \]
		as well as $0$ with multiplicity $2$ and $\C$ is the unique representation with this infinitesimal character \cite[Section 6.4.3]{CL}.\label{obs:Eigenvalues}
		\item We have $d_i= \left(m-\sum_{j\not=i}n_jd_j\right)/n_i\leq m/n_i$ for all $i=1,\hdots,k$.\label{obs:BoundOnD}
		\item Either $n_id_i \equiv 0 \bmod 4$ for every $i$ or there exist exactly two integers $i$, say $i_1$ and $i_2$, such that $n_id_i \not\equiv 0 \bmod 4$. These integers satisfy $n_{i_1} d_{i_1}n_{i_2}d_{i_2} \equiv 3 \bmod 4$ \cite[Corollary 8.2.15 (iii)]{CL}.\label{obs:0mod4}
	\end{enumerate}
	
	\noindent The lower bound for $\g_m$ is obtained from
	
	\begin{prp}\label{prp:BorcherdsFreitagWeissauerForm}
		Let $f\in\mathrm{S}_{m/2}(\SL_2(\Z))$ be an eigenform with corresponding automorphic representation $\pi_f\in\Pi_{\mathrm{disc}}(\PGL_2)$. Then there exist $\pi^+,\pi^-\in\Pi_{\mathrm{disc}}(\Ort_m)^{\Ort_m(\widehat{\Z})}$ with $\pi^+_\infty\cong\C$, $\pi^-_\infty\cong\det$ and $\psi(\pi^\pm,\St) = \pi_f[\frac{m}{2}]$ and
		\[ \g(\pi^+) = \begin{cases}
			\frac{m}{2} & \text{if } L\left(\frac{1}{2},\pi_f\right)\not=0 \\
			\frac{3m}{4}& \text{if } L\left(\frac{1}{2},\pi_f\right)=0 .
		\end{cases} \]
	\end{prp}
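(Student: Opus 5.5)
Set $\psi=\pi_f[\frac m2]$, so $k=1$, $n_1=2$, $d_1=\frac m2$. We apply Theorem \ref{thm:Arthur} to $\psi$, pass from $\Sort_m$ to $\Ort_m$, and then read off $\g(\pi^+)$ from the standard $L$-function via Theorem \ref{thm:Boecherer}.

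\textbf{Existence.} First check that $\psi_\infty$ has the eigenvalues required by observation \ref{obs:Eigenvalues}. Since $f$ has weight $m/2$, the eigenvalues of $c_\infty(\pi_f)$ are $\pm\frac{m/2-1}{2}$, and those of $[\frac m2]_\infty$ are $-\frac{m/2-1}{2}+j$ for $0\le j<\frac m2$. Their sums therefore run over $-(\frac m2-1),\dots,0$ and $0,\dots,\frac m2-1$, which gives each value $\pm1,\dots,\pm(\frac m2-1)$ once and $0$ twice.

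Next, $n_1d_1=m\equiv0\bmod4$, so $I_0=\{1\}$. Since $d_1=\frac m2$ is even (as $8\mid m$), rule (i) gives $\chi(s_1)=(-1)^{m/4}=1$. The right-hand side of the condition in Theorem \ref{thm:Arthur} is an empty product, hence equal to $1$. So the condition holds, and since $I_0=\{1,\dots,k\}$ we get $\sum_{\pi\in\Pi}m(\pi)=2$.

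By \cite[Section 6.4.3]{CL}, $\C$ is the unique representation of $\Sort_m(\R)$ with this infinitesimal character. The automorphic representations of $\Sort_m$ with parameter $\psi$ therefore correspond to a two-dimensional subspace of $\C[\widetilde{I\!I}_{m,0}]$, which is an eigenspace for the Hecke algebra. Using
\[ \mathcal{A}_\C(\Sort_m)\cong\mathcal{A}_\C(\Ort_m)\oplus\mathcal{A}_{\det}(\Ort_m), \]
I would show that this eigenspace meets both summands, giving $\pi^+$ with $\pi^+_\infty\cong\C$ and $\pi^-$ with $\pi^-_\infty\cong\det$. The argument I would try is that the outer automorphism (conjugation by an element of determinant $-1$) preserves the parameter and acts on the multiplicity-$2$ space by a nontrivial involution. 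Its $+1$ and $-1$ eigenlines are then the restrictions of the $\Ort_m$-forms of types $\C$ and $\det$. This is the step where I expect the most care, since it needs the precise form of the multiplicity-$2$ case in \cite[Section 8.5]{CL}.

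\textbf{Computing $\g(\pi^+)$.} The standard $L$-function is
\[ L(s,\pi^+)=\prod_{j=0}^{m/2-1}L\Bigl(s+j-\tfrac{m/2-1}{2},\pi_f\Bigr). \]
Each factor is entire. By the functional equation and the Euler product, $L(s,\pi_f)\ne0$ for $\Rea s\ge1$ and for $\Rea s\le0$, using the nonvanishing of the $\Gamma$-factor at these points. So the only possible zero at a half-integer is at the center $\frac12$.

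The shifts $j-\frac{m/2-1}{2}$ are half-integers, since $\frac m2$ is even. At an integer $t\ge1$, the factor indexed by $j$ is evaluated at the half-integer $t+j-\frac{m/2-1}{2}$. This is the center exactly when $j=\frac m4-t$. The value $t=\frac m4$ gives $j=0$, and no larger $t$ makes any factor vanish.

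Hence if $L(\frac12,\pi_f)\ne0$, then $T(\pi^+)=\emptyset$ and $\g(\pi^+)=\frac m2$ by Theorem \ref{thm:Boecherer}(i). If $L(\frac12,\pi_f)=0$, then $L(s,\pi^+)$ vanishes at every $t\in\{1,\dots,\frac m4\}$. Since there are no poles, $t(\pi^+)=\frac m4$ and it is a zero, so Theorem \ref{thm:Boecherer}(iii) gives $\g(\pi^+)=\frac m2+\frac m4=\frac{3m}4$.
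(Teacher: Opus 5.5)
Your existence step for $\pi^\pm$ has a genuine gap. You assert that the outer involution $s$ acts on the two-dimensional space of $\Sort_m$-forms with parameter $\pi_f[\frac m2]$ with both eigenvalues $+1$ and $-1$, but you do not prove it, and it is the non-formal content of this step. An involution of a plane could also be $\pm\mathrm{id}$. In that case both $\Sort_m$-forms would come from $\Ort_m$-forms of the same type, and there would be no $\pi^+$ (or no $\pi^-$). The bare count $\sum_{\pi\in\Pi}m(\pi)=2$ does not rule this out.

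The missing idea, which the paper takes from \cite[Proposition 7.5.1]{CL}, is to separate $\phi$ and $s\phi$ by Hecke eigenvalues. Let $\phi$ generate $\pi'_{\mathrm f}$. Then $c_p(\pi')=c_p(\pi_f)\otimes[\frac m2]_p$ has eigenvalues $\alpha_p^{\pm1}p^{h}$ with $|\alpha_p|=1$ and $h\in\frac12+\Z$, so none of them equals $\pm1$ (the paper argues this via non-reality of $\alpha_p$). A semisimple class of $\Sort_m(\C)$ with no eigenvalue $\pm1$ is not stable under $\Ort_m(\C)$-conjugation. Since the Satake isomorphism is compatible with $s$, the $\mathcal{H}_p(\Sort_m)$-eigencharacters of $\phi$ and $s\phi$ differ. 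Hence $\phi$ and $s\phi$ are linearly independent, and $\phi\pm s\phi$ are nonzero vectors in $\mathcal{A}_\C(\Ort_m)$ and $\mathcal{A}_{\det}(\Ort_m)$, generating $\pi^+$ and $\pi^-$. This also shows that calling the two-dimensional space a single Hecke eigenspace is wrong for $\mathcal{H}(\Sort_m)$. It is the sum of the distinct eigenlines $\C\phi$ and $\C s\phi$, and it is one eigenspace only for $\mathcal{H}(\Ort_m)$; that splitting is exactly what makes the argument work.

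In the $L$-function step, your claim that $L(s,\pi_f)\neq0$ for $\Rea s\le0$ is false. The completed function $\Lambda(s,\pi_f)=\Gamma_\C(s+\frac{m/2-1}{2})L(s,\pi_f)$ is entire, and $\Gamma_\C$ has poles but no zeros. So $L(s,\pi_f)$ has trivial zeros at the half-integers $s=-\frac{m/2-1}{2}-n$, $n\geq0$, and it is not true that the only half-integer zero is the center. The functional equation gives nonvanishing at $\Rea s\le0$ only where $\Gamma_\C(s+\frac{m/2-1}{2})$ is finite. Your conclusion still holds: at an integer $t\geq1$ the $j$-th factor is evaluated at a point $s$ with $s+\frac{m/2-1}{2}=t+j\geq1$, which is never a trivial zero. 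Equivalently, as in the paper, the integer zeros of the $j$-th factor are $\Z_{\le -j}$, together with $\frac m4-j$ when $L(\frac12,\pi_f)=0$. With this repair, the rest of your computation agrees with the paper: $T(\pi^+)=\emptyset$ if $L(\frac12,\pi_f)\neq0$, and otherwise $t(\pi^+)=\frac m4$ is a zero, so $\g(\pi^+)=\frac{3m}{4}$.
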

	\begin{proof}
		Let the notation be as in Theorem \ref{thm:Arthur}. Since $k=1$ with $n_1=2$ and $d_1\equiv0\bmod4$, the character $\chi$ is trivial and clearly satisfies the condition of Theorem \ref{thm:Arthur}. Therefore, there exist $\pi',\pi''\in\Pi_{\mathrm{disc}}(\Sort_m)^{\Sort_m(\widehat{\Z})}$ with $\psi(\pi',\St) = \psi(\pi'',\St) = \pi_f[\frac{m}{2}]$. The eigenvalues of $\psi(\pi',\St)$ are
		\[ \pm\left(\frac{m/2-1}{2}\right)+\left(-\frac{m/2-1}{2}+j\right),\quad j=0,\hdots,\frac{m}{2}-1 \]
		so that by Observation \ref{obs:Eigenvalues}, $\pi'_\infty\cong\pi''_\infty\cong\C$.
		
		We need to show that there exist representations $\pi^\pm\in\Pi_{\mathrm{disc}}(\Ort_m)^{\Ort_m(\widehat{\Z})}$ for which $\pi^+\oplus\pi^-\cong\pi'\oplus\pi''$ when restricted to $\Sort_m$. This can be proved exactly as was done in \cite[Proposition 7.5.1]{CL} for $f=\Delta$. For the convenience of the reader we repeat the argument here: Let $\pi'\cong\pi'_\infty\otimes\pi'_{\mathrm{f}}$ with $0\not=\phi\in\mathcal{A}_\C(\Sort_m)$ such that $\phi$ generates $\pi'_\mathrm{f}$. The non-trivial element $s\in\Ort_m(\Z)/\Sort_m(\Z)\cong\Z/2\Z$ acts on the space $\mathcal{A}_\C(\Sort_m)$ by conjugation. Under the decomposition
		\[ \mathcal{A}_\C(\Sort_m)\cong\mathcal{A}_\C(\Ort_m)\oplus\mathcal{A}_{\det}(\Ort_m) \]
		it acts trivially on $\mathcal{A}_\C(\Ort_m)$ and by multiplication with $-1$ on $\mathcal{A}_{\det}(\Ort_m)$. Let $s\pi'\in\Pi_{\mathrm{disc}}(\Sort_m)^{\Sort_m(\widehat{\Z})}$ be the representation generated by $s\phi$. We choose a prime $p$ such that the $p$'th Fourier coefficient $a(p)$ of $f$ is non-zero. Then the conjugacy
		class $c_p(\pi') \subset \Sort_m(\C)$ does not have eigenvalue $\pm1$ because the eigenvalues of $c_p(\pi_f)$ are not real (they are the roots of $X^2 - (a(p)/p^{\frac{m/2-1}{2}})X + 1)$. This conjugacy	class of $\Sort_m(\C)$ is therefore not stable under the action of $\Ort_m(\C)$ by conjugation so that $c_p(\pi')\not=c_p(s\pi')$ as $\Sort_m(\C)$-conjugacy classes since the Satake isomorphism is compatible with isomorphisms. Hence, $\phi$ and $s\phi$ are not proportional. It follows that $\phi\pm s\phi$ generate representations $\pi^{\pm}\in\Pi_{\mathrm{disc}}(\Ort_m)^{\Ort_m(\widehat{\Z})}$ with $\pi^+_\infty\cong\C$, $\pi^-_\infty\cong\det$ and $\psi(\pi^\pm,\St) = \pi_f[\frac{m}{2}]$.
		
		For calculating $\g(\pi^+)$, we need to determine the zeros and poles of $L(s,\pi^+)$. Note that $\Lambda(s,\pi_f) = \Lambda(1-s,\pi_f)$ for $\Lambda(s,\pi_f) = \Gamma_\C\left(s+\frac{m/2-1}{2}\right)L(s,\pi_f)$ where $\Gamma_\C(s) = 2(2\pi)^{-s}\Gamma(s)$ (cf.\ \cite[Section 8.2.21]{CL}) so that the integer zeros of the factor $L\left(s+j-\frac{m/2-1}{2},\pi_f\right)$ lie at $s\in\Z_{\leq-j}$ and $s=m/4-j$ if $L\left(\frac{1}{2},\pi_f\right) = 0$. Since $L(s,\pi^+)$ is the product of these functions for $j=0,\hdots,m/2-1$, the claim follows from Theorem \ref{thm:Boecherer}.
	\end{proof}
	
	We can finally prove Theorem \ref{thm:mainthm}.
	
	\begin{proof}[Proof of Theorem \ref{thm:mainthm}]
		The lower bound follows from Proposition \ref{prp:BorcherdsFreitagWeissauerForm}. Now let $\pi\in\Pi_{\mathrm{disc}}(\Ort_m)^{\Ort_m(\widehat{\Z})}$ with $\pi_\infty = 1$. By Observation \ref{obs:Max} and Theorem \ref{thm:Boecherer}, we need to show that $L(t(\pi),\pi)$ is a pole or that $L(t,\pi)$ does not vanish for integers $t>m/4$. Let $\psi(\pi,\St) = \bigoplus_{i=1}^k\pi_i[d_i]$ and assume that $d_1 = \max(d_i)_i$. 
		
		Let us first assume that $d_1<m/2-1$. Let $t\geq m/4$. Then
		\[ t+j-\frac{d_i-1}{2} > \frac{m}{4}-\frac{m}{4}+1 = 1. \]
		Therefore,
		\[ L\left(t,\pi\right) = \prod_{i=1}^k\prod_{j=0}^{d_i-1}L\left(t+j-\frac{d_i-1}{2},\pi_i\right) \]
		does not vanish.
		
		Now assume that $d_1\geq m/2-1$. Then $n_1$ is either $1$ or $2$ by Observation \ref{obs:BoundOnD}. We first assume that $n_1=1$. Then by Observations \ref{obs:Eigenvalues} and \ref{obs:BoundOnD}, $d_i<d_1$ for $i>1$ and for $t\geq\frac{d_1+1}{2}$ we have
		\[ t + j - \frac{d_i-1}{2}>1 \]
		while for $t=\frac{d_1+1}{2}$ and $j=0$ we have
		\[ t + j - \frac{d_1-1}{2}=1. \]
		Hence, 
		\[\prod_{j=0}^{d_1-1}L\left(s+j-\frac{d_i-1}{2},1\right) \]
		has a simple pole in $\frac{d_1+1}{2}$ and 
		\[ \prod_{i=2}^k\prod_{j=0}^{d_i-1}L\left(t+j-\frac{d_i-1}{2},\pi_i\right) \]
		does not vanish for $t\geq\frac{d_1+1}{2}$. Therefore, $L(t(\pi),\pi)$ is a pole.
		
		Finally, assume that $n_1=2$. By Observation \ref{obs:0mod4}, $d_1$ is even, hence, $d_1=m/2$ and $\pi = \pi_1[m/2]$. In this case the eigenvalues of the infinitesimal character of $\pi_1$ are $\pm\left(\frac{m/2-1}{2}\right)$ because of Observation \ref{obs:Eigenvalues} so that $\pi_1\cong\pi_f$ for some eigenform $f\in\mathrm{S}_{m/2}(\SL_2(\Z))$. By Proposition \ref{prp:BorcherdsFreitagWeissauerForm}, $\g(\pi)\leq\frac{3m}{4}$.
	\end{proof}
	
	We have seen in the proof that $\g(\pi) = \frac{3m}{4}$ can only occur when $\pi = \pi_f[m/2]$ for an $f\in\mathrm{S}_{m/2}(\SL_2(\Z))$ with $L\left(\frac{1}{2},\pi_f\right) = 0$. This implies Remark \ref{rem}.
	
	\bibliographystyle{amsplain}
	\bibliography{references}

\providecommand{\bysame}{\leavevmode\hbox to3em{\hrulefill}\thinspace}
\providecommand{\MR}{\relax\ifhmode\unskip\space\fi MR }
\providecommand{\MRhref}[2]{%
  \href{http://www.ams.org/mathscinet-getitem?mr=#1}{#2}
}
\providecommand{\href}[2]{#2}
\begin{thebibliography}{10}

\bibitem{Arthur}
J.~Arthur, \emph{The endoscopic classification of representations: Orthogonal
  and symplectic groups}, American Mathematical Society Colloquium
  Publications, vol.~61, American Mathematical Society, Providence, RI, 2013.
  \MR{3135650}

\bibitem{Boecherer}
S.~B\"ocherer, \emph{{\"U}ber den {K}ern der {T}hetaliftung}, Abh. Math. Sem.
  Univ. Hamburg \textbf{60} (1990), 209--223. \MR{1087128}

\bibitem{BFW}
R.~E. Borcherds, E.~Freitag, and R.~Weissauer, \emph{A {S}iegel cusp form of
  degree 12 and weight 12}, J. Reine Angew. Math. \textbf{494} (1998),
  141--153. \MR{1604476}

\bibitem{CL}
G.~Chenevier and J.~Lannes, \emph{Automorphic forms and even unimodular
  lattices}, Results in Mathematics and Related Areas. 3rd Series. A Series of
  Modern Surveys in Mathematics, vol.~69, Springer, Cham, 2019, Kneser
  neighbors of Niemeier lattices, Translated from the French by Reinie Ern\'e.
  \MR{3929692}

\bibitem{Cogdell}
J.~W. Cogdell, \emph{Lectures on {$L$}-functions, converse theorems, and
  functoriality for {${\rm GL}_n$}}, Lectures on automorphic {$L$}-functions,
  Fields Inst. Monogr., vol.~20, Amer. Math. Soc., Providence, RI, 2004,
  pp.~1--96. \MR{2071506}

\bibitem{ConreyFarmer}
J.~B. Conrey and D.~W. Farmer, \emph{Hecke operators and the nonvanishing of
  {$L$}-functions}, Topics in number theory ({U}niversity {P}ark, {PA}, 1997),
  Math. Appl., vol. 467, Kluwer Acad. Publ., Dordrecht, 1999, pp.~143--150.
  \MR{1691315}

\bibitem{GetzHahn}
J.~R. Getz and H.~Hahn, \emph{An introduction to automorphic
  representations---with a view toward trace formulae}, Graduate Texts in
  Mathematics, vol. 300, Springer, Cham, 2024. \MR{4738301}

\bibitem{GodementJacquet}
R.~Godement and H.~Jacquet, \emph{Zeta functions of simple algebras}, Lecture
  Notes in Mathematics, vol. 260, Springer-Verlag, Berlin-New York, 1972.
  \MR{342495}

\bibitem{Igusa}
J.~Igusa, \emph{Schottky's invariant and quadratic forms}, E. {B}.
  {C}hristoffel ({A}achen/{M}onschau, 1979), Birkh\"auser Verlag, Basel-Boston,
  Mass., 1981, pp.~352--362. \MR{661078}

\bibitem{JacquetShalika}
H.~Jacquet and J.~A. Shalika, \emph{On {E}uler products and the classification
  of automorphic representations. {I}}, Amer. J. Math. \textbf{103} (1981),
  no.~3, 499--558. \MR{618323}

\bibitem{Langlands}
R.~P. Langlands, \emph{Euler products}, Yale Mathematical Monographs, vol.~1,
  Yale University Press, New Haven, Conn.-London, 1971, A James K. Whittemore
  Lecture in Mathematics given at Yale University, 1967. \MR{419366}

\bibitem{Rallis}
S.~Rallis, \emph{Langlands' functoriality and the {W}eil representation}, Amer.
  J. Math. \textbf{104} (1982), no.~3, 469--515. \MR{658543}

\bibitem{Salvati}
R.~Salvati~Manni, \emph{Siegel cusp forms of degree {$12k$} and weight
  {$12k$}}, Manuscripta Math. \textbf{101} (2000), no.~2, 267--269.
  \MR{1742244}

\bibitem{Shahidi}
F.~Shahidi, \emph{Eisenstein series and automorphic {$L$}-functions}, American
  Mathematical Society Colloquium Publications, vol.~58, American Mathematical
  Society, Providence, RI, 2010. \MR{2683009}

\bibitem{Taibi}
O.~Ta\"ibi, \emph{Arthur's multiplicity formula for certain inner forms of
  special orthogonal and symplectic groups}, J. Eur. Math. Soc. (JEMS)
  \textbf{21} (2019), no.~3, 839--871. \MR{3908767}

\end{thebibliography}
	
\end{document}